\numberwithin{equation}{section}
\theoremstyle{plain}
\newtheorem{theorem}[subsection]{Theorem}
\newtheorem{prop}[subsection]{Proposition}
\newtheorem{corollary}[subsection]{Corollary}
\theoremstyle{definition}
\newtheorem{example}[subsection]{Example}
\newcommand{\A}{\mathcal{A}}
\newcommand{\I}{\mathcal{I}}
\newcommand{\CC}{\mathbb{C}}
\newcommand{\Z}{\mathbb{Z}}
\newcommand{\K}{\mathbb{K}}
\DeclareMathOperator{\gr}{gr}
\begin{document}

\title[On torsion freeness  for the decomposable Orlik-Solomon algebra]%
{On torsion freeness for the decomposable Orlik-Solomon algebra}

\author[A.~M\u{a}cinic]{Anca~M\u{a}cinic$^*$}
\address{Simion Stoilow Institute of Mathematics, 
P.O. Box 1-764,
RO-014700 Bucharest, Romania}
\email{Anca.Macinic@imar.ro}
\thanks{$^*$ Partially supported by 
 a grant of the Romanian Ministry of Education and Research, CNCS - UEFISCDI, project number PN-III-P4-ID-PCE-2020-2798, within PNCDI III}

\subjclass[2010]{Primary 52C35, 05B35; Secondary 55Q52}

\keywords{ decomposable Orlik--Solomon ideal,
matroid, hyperplane arrangement, hypersolvability}

\date{February 2023}

\begin{abstract}
We prove the torsion freeness of the decomposable Orlik--Solomon algebra of a simple matroid on ground set $[n]$. In the class of hypersolvable  \& non-supersolvable complex hyperplane arrangements, the  torsion freeness, in a certain degree, of this combinatorially defined object, associated to the intersection lattice of the arrangement,  impacts on the first non-vanishing higher homotopy group of the complement of the arrangement.
\end{abstract}

\maketitle

\section{Introduction}
\label{sec:introd}

Let $\A$ be an arrangement of hyperplanes in a complex vector space $V$, i.e., a finite set of hyperplanes in $V$. The {\it intersection lattice} of $\A$, denoted $\mathcal{L}(\A)$, is by definition the poset of intersections of subsets of $\A$, ordered by reverse inclusion, with a rank map given by codimension (see \cite{OT} for a comprehensive approach).  $\mathcal{L}(\A)$ is a geometric lattice, so it translates into the the existence of a simple matroid $M(\A)$ over the ground set $\A$, where dependency is defined as linear dependency. We will refer to  a property or invariant associated to the arrangement as {\it combinatorially determined} (or simply, {\it combinatorial}), if they are determined by the intersection lattice of the arrangement.\\

A strongly combinatorial class of arrangements (in the above sense) is the class of  {\it supersolvable arrangements}, introduced by Stanley in \cite{S}, see also  the equivalent notion of {\it fiber type} arrangements, topologically defined by Falk-Randell in \cite {FR}. 
Supersolvability was generalized by Jambu-Papadima (\cite{JP1}) to {\it hypersolvability}. Hypersolvable arrangements are characterised by the fact that they admit deformations into supersolvable arrangements (see \cite{JP2} for the precise definition of such deformations).
 A notable subclass of hypersolvable arrangements is the class of $2$-generic arrangements (arrangements whose intersection lattice has no dependency in rank $2$). 
Complements of supersolvable arrangements are known to be $K(\pi,1)$ spaces, property that no longer holds in the general hypothesis of hypersolvability. In fact, asphericity of the complement characterizes supersolvability inside the hypersolvable class, so it makes sense to study the occurrence and properties of higher non-trivial homotopy groups of the complement of a hypersolvable, non-supersolvable, arrangement.

This short note  tries to answer a question raised in \cite{MMP} concerning the torsion freeness for the {\it decomposable Orlik-Solomon} (OS) algebra associated to an arrangement of hyperplanes. \\

The motivation comes from a result in \cite{MMP} (see Theorem \ref{thm:MMp_torsion}), which gives a topological meaning to the torsion freeness of the decomposable OS algebra (in a certain degree), for hypersolvable \& non-supersolvable arrangements of hyperplanes,  via a connection to the first non-vanishing higher homotopy group of the complement of the arrangement.\\

In general, the higher homotopy groups of a complement of an arrangement are very difficult to compute, and the same holds true even for their 'approximation' by graded objects (see \cite{PS}). If $\pi_1:=\pi_1(X(\A))$ is the fundamental group of the complement $X(\A)$ of an arrangement $\A$, then any higher homotopy group $\pi_{*>1}:=\pi_{*>1}(X(\A))$ has a module structure over the group ring $R:= \Z \pi_1$, and one can consider the associated graded module 

\begin{equation}
\label{eq:graded_pip}
gr_I^{\bullet}(\pi_p):= \oplus_{q\ge 0} (\pi_p I^q/\pi_p I^{q+1})
\end{equation}

\noindent over the graded ring  $\gr_I^\bullet R$, where $I \subset R$ is the augmentation ideal  (in line with the notations from \cite{MMP}, throughout the text, $-^{\bullet}$ will signify the presence of a grading).\\

For hypersolvable arrangements $\A$ consider the first non-vanishing higher homotopy group $\pi_{p(\A)}$ of the complement $X(\A)$. Then $p(\A)$ is combinatorially determined, see \cite{PS}.
   The graded ring  $\gr_I^\bullet R$ is combinatorial and torsion free (\cite{DP}), and so is the first graded piece $\gr_I^0 \pi_{p(\A)}$ of the associated graded module \eqref{eq:graded_pip} (\cite{PS}).  In \cite{MMP} the attention turns to the second graded piece of the graded module \eqref{eq:graded_pip} associated to the  first non-vanishing higher homotopy group, $\gr ^1_I \pi_{p(\A)}$, whose torsion freeness is characterised in terms of the torsion freeness, in a certain degree, of a combinatorially defined graded algebra, the so called decomposable OS algebra (see the next section for the definition).\\
   
   Our main result, Theorem \ref{thm:main}, proves the torsion freeness of the decomposable OS algebra of an arbitrary simple matroid on ground set $[n]$. In particular, the decomposable OS algebra associated to an arrangement of hyperplanes is torsion free.
As a corollary, we obtain that $\gr ^1_I \pi_{p(\A)}$ is also torsion free.

\section{Preliminaries}

Let $\Lambda^{\bullet}$  be the free exterior algebra over $\Z$ on $n$ generators $\{e_1, \dots, e_n \}$. 
The degree $m$ component is generated by monomials $e_X:=e_{i_1} \wedge  \dots \wedge e_{i_m}$, where $X \subseteq [n]$ is the ordered set $\{ i_1 < \dots < i_m\}$ and $e_{\emptyset} =1$.  
Define the boundary map on  $\Lambda^{\bullet}, \; \partial: \Lambda^{\bullet} \rightarrow \Lambda^{\bullet-1}$ to be the unique degree $-1$ graded algebra derivation such that $\partial(e_i) = 1, \; i  \in [n]$. \\

Let $M$ be a simple matroid on the ground set $[n]$. The {\it Orlik-Solomon ideal} of $M$ is the (graded) ideal $\I(M)$ in $\Lambda^{\bullet}$ generated by the boundaries $\{ \partial (e_C)\; |\; C\ \in \mathcal{C}(M)\}$  of the circuits in $M$.
The {\it decomposable Orlik-Solomon ideal} is the product ideal $\Lambda^{+} \mathcal{I}(M) \subset \mathcal{I}(M)$, where $\Lambda^{+}$ is the ideal of $\Lambda^{\bullet}$ consisting of elements of strictly positive degree.
The {\it Orlik-Solomon algebra} of $M$ is the graded quotient algebra $A^{\bullet}(M) := \Lambda^{\bullet} / \mathcal{I}(M)$. The {\it decomposable Orlik-Solomon algebra}  of $M$ is the graded quotient algebra $A_{+}^{\bullet}(M) := \Lambda^{\bullet} /\Lambda^{+} \mathcal{I}(M)$. 
All the above objects can be defined over an arbitrary field $\K$. As ongoing convention,  we will denote by $-_{\K}$ the specialization to $-\K$ coefficients.  \\

We will be mainly interested in the case of matroids coming from  arrangements of hyperplanes.
 A {\it circuit} $C$ in an arrangement $\A$ is a minimally (linearly) dependent subset of $\A$.  Denote by $\I(\A)$ the OS ideal of $\A$, and by $A^{\bullet}(\A), \; A_{+}^{\bullet}(\A)$ the OS, respectively decomposable OS algebra of $\A$.

 The OS algebra  $A^{\bullet}(\A)$ of an arrangement is torsion free, since it admits a basis expressed solely in terms of circuits of $\A$, that does not depend on the field of coefficients, for any specialization $A^{\bullet}_{\K}(\A)$ (see for instance \cite{F, Y}). It also carries significant topological meaning, as it is isomorphic to the cohomology algebra, with integer coefficients, of the complement of the arrangement, by an emblematic result of Orlik and Solomon. If the arrangement $\A$ is hypersolvable, the decomposable OS algebra of $\A$ reflects some of the topology of the complement:

\begin{theorem}(\cite{MMP})
\label{thm:MMp_torsion}
Let $\A$ be a hypersolvable and not supersolvable complex hyperplane arrangement with complement $X(\A)$, and $p>1$ minimal with the property that $\pi_p(X(\A))$ is non-trivial. Then the following are equivalent:
\begin{enumerate}
\item The second graded piece of the graded module \eqref{eq:graded_pip}, $\gr^1_I \pi_p(X(\A))$, has no torsion.
\item The decomposable Orlik-Solomon algebra,  $A_{+}^{\bullet}(\A)$, is free in degree $\bullet= p+2$.
\end{enumerate}
\end{theorem}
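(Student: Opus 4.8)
I would prove the equivalence by reducing condition (1) to a computation with the $I$-adic filtration on the equivariant chain complex of the universal cover of $X(\A)$, pushing the analysis of the first graded piece $\gr^0_I\pi_p$ from \cite{PS} one filtration step further. \textbf{Step 1 (an algebraic model for $\pi_p$).} Since $\A$ is hypersolvable and not supersolvable, $p>1$ and the universal cover $\widetilde{X(\A)}$ is $(p-1)$-connected, so by Hurewicz $\pi_p\cong H_p(\widetilde{X(\A)};\Z)$ as $R$-modules. The space $X(\A)$ is minimal and $H^\bullet(X(\A);\Z)\cong A^\bullet(\A)$ is torsion free, so the chain complex of $\widetilde{X(\A)}$ is $R$-equivalent to a complex $(\mathbf C_\bullet,\partial)$ of free $R$-modules with $\mathbf C_i\cong R\otimes_\Z A^i(\A)$ and minimal differential $\partial(\mathbf C_i)\subseteq I\,\mathbf C_{i-1}$; it is acyclic in degrees $1,\dots,p-1$ and $H_p(\mathbf C_\bullet)=\pi_p$.

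\textbf{Step 2 (linearization).} Filter $\mathbf C_\bullet$ by $F^q\mathbf C_i:=I^q\mathbf C_i$. By $R$-linearity and minimality, $\partial$ raises this filtration by at least one, so the associated spectral sequence has $E_1$-page the combinatorial linearized complex $\bigl(\gr_I^\bullet R\otimes_\Z A^\bullet(\A),\ \partial^{\mathrm{lin}}\bigr)$, where $\partial^{\mathrm{lin}}$ is the leading term of $\partial$ (of bidegree $(1,-1)$ with respect to (filtration, homological degree)), and it converges to $\gr^\bullet_I H_\bullet(\mathbf C_\bullet)$. By \cite{DP} the ring $\gr_I^\bullet R$ is torsion free and combinatorial, and since $\A$ is hypersolvable the lower-central-series formula holds; this is exactly what forces the spectral sequence to degenerate in the range of bidegrees that controls $\gr^0_I\pi_p$ and $\gr^1_I\pi_p$. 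For the former one recovers the fact (\cite{PS}) that $\gr^0_I\pi_p$ is the kernel of a combinatorial map out of $A^p(\A)$, in particular torsion free.

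\textbf{Step 3 (isolating the torsion).} The new input is to inspect the linearized complex in homological degrees $p$ and $p+1$. Using the elementary identity $(\Lambda^{+}\I(\A))^{p+2}=\Lambda^{1}\cdot\I(\A)^{p+1}$, a careful analysis of $\partial^{\mathrm{lin}}$ identifies the torsion subgroup of $\gr^1_I\pi_p$ with the torsion subgroup of $(\I(\A)/\Lambda^{+}\I(\A))^{p+2}=\I(\A)^{p+2}/\Lambda^{1}\I(\A)^{p+1}$, the module of degree-$(p+2)$ minimal generators of the Orlik--Solomon ideal. To guarantee that no torsion is lost or created when passing from the $R$-module $\pi_p$ to this combinatorial model, one runs the comparison simultaneously over $\Z$ and over every prime field, reading off torsion from jumps in the $\K$-dimension of the linearized homology. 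Finally, since $A^{p+2}(\A)$ is torsion free, the split exact sequence $0\to(\I(\A)/\Lambda^{+}\I(\A))^{p+2}\to A_{+}^{p+2}(\A)\to A^{p+2}(\A)\to 0$ gives $\mathrm{torsion}\,A_{+}^{p+2}(\A)\cong\mathrm{torsion}\,(\I(\A)/\Lambda^{+}\I(\A))^{p+2}$, hence $\mathrm{torsion}\,\gr^1_I\pi_p\cong\mathrm{torsion}\,A_{+}^{p+2}(\A)$. As $A_{+}^{p+2}(\A)$ is finitely generated, it is free in degree $p+2$ if and only if it is torsion free, so (1) and (2) hold or fail together, which is the asserted equivalence.

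\textbf{Main obstacle.} The delicate points lie entirely in Steps 2--3: establishing that the $I$-adic spectral sequence degenerates far enough that no higher differential injects spurious torsion into $\gr^1_I\pi_p$ (this is precisely where hypersolvability, through the lower-central-series formula and the combinatorial results of \cite{PS}, is indispensable and cannot be dropped), and tracking the decomposable products $\Lambda^{+}\cdot\I(\A)$ against the linear strand of $\partial^{\mathrm{lin}}$ precisely enough to pin down $(\I(\A)/\Lambda^{+}\I(\A))^{p+2}$ as the exact source of torsion, and not some nearby, differently-torsioned group — which is what produces the specific degree $p+2$.
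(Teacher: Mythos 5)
This theorem is not proved in the paper you are annotating: it is imported verbatim from \cite{MMP}, and the present article only uses it (via Corollary \ref{cor:pi_p}). So the only meaningful comparison is with the proof in \cite{MMP} itself. Your outline is broadly aligned with the strategy there — minimality of $X(\A)$, the $I$-adic filtration on the equivariant chain complex of the universal cover, the Papadima--Suciu description of $\pi_p$ for hypersolvable arrangements, and the final reduction via the split sequence $0\to(\I(\A)/\Lambda^{+}\I(\A))^{p+2}\to A_{+}^{p+2}(\A)\to A^{p+2}(\A)\to 0$ together with the observation that a finitely generated abelian group is free iff it is torsion free. (One note: \cite{MMP} works more concretely with the presentation of $\pi_p$ coming from the supersolvable deformation $\hat\A$ of $\A$ and computes $\pi_p/I^2\pi_p$ directly from it, rather than invoking a general degeneration statement for the $I$-adic spectral sequence.)

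However, as a proof your proposal has a genuine gap, and you have in effect located it yourself in the ``main obstacle'' paragraph. The entire mathematical content of the theorem is the isomorphism $\mathrm{torsion}\,\gr^1_I\pi_p\cong\mathrm{torsion}\,(\I(\A)/\Lambda^{+}\I(\A))^{p+2}$; everything before and after it in your argument is routine. In Step 2 you assert that the lower-central-series formula ``forces the spectral sequence to degenerate in the range of bidegrees that controls $\gr^0_I\pi_p$ and $\gr^1_I\pi_p$'' without any argument — the LCS formula of Jambu--Papadima is a statement about $\gr(\pi_1)$ and does not by itself control differentials landing in or emanating from the column computing $\gr^1_I H_p$; a nonzero $d_2$ there is exactly what could create or destroy torsion. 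In Step 3 the identification of the torsion subgroup with the degree-$(p+2)$ minimal generators of $\I(\A)$ is attributed to ``a careful analysis of $\partial^{\mathrm{lin}}$'' that is never performed, and it is precisely this analysis that produces the shift to degree $p+2$ (via the combinatorial fact, recorded after Theorem \ref{thm:MMp_torsion}, that $p+2$ is the minimal length of a circuit of cardinality $>3$). So the proposal is a plausible roadmap consistent with \cite{MMP}, but not a proof: the two steps you defer are the theorem.
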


In the hypothesis of the previous theorem, we know that the minimal length of a circuit $C \in \mathcal{C}(\A)$ with  $|C|>3$ is equal to $p+2$. In particular, for a $2$-generic arrangement $\A$,  $p+2$ is the minimal length of a circuit in $\mathcal{C}(\A)$.\\

\section{Torsion freeness for the decomposable OS algebra}

Let $\Lambda^{\bullet}_{\K}$ be the free exterior algebra on $n$ generators $\{e_1, \dots, e_n \}$ over a field $\K$, defined in the previous section. A {\it monomial order} on  $\Lambda^{\bullet}_{\K}$  is a total order on the set of monomials $Mon(\Lambda^{\bullet}_{\K})$ of $\Lambda^{\bullet}_{\K}$ such that:
\begin{enumerate}
\item $1<e_X$ for any monomial $e_X \neq 1$
\item if $e_X<e_Y$ then $e_X \wedge e_Z < e_Y \wedge e_Z$, for $e_X, e_Y, e_Z$ monomials such that $e_X \wedge e_Z \neq 0 \neq e_Y \wedge e_Z$.
\end{enumerate}
 We say that a monomial $e_X$ {\it divides} a monomial $e_Y$ iff $X \subseteq Y$.\\

Let $f = \sum_{i=1}^m a_i e_{X_i} \in \Lambda^{\bullet}_{\K}$, where $a_i \in \K \setminus \{0\}$ and $e_{X_i} \in Mon(\Lambda^{\bullet}_{\K})$. The support of $f$ is the set of monomials  $supp(f):= \{e_{X_1}, \dots, e_{X_m}\}$.
$in_{\prec}(f)$, the {\it initial monomial} of $f$ with respect to a monomial order $\prec$,  is the biggest monomial with respect to $\prec$ among $\{e_{X_1}, \dots, e_{X_m}\}$.

A {\it Gr\"obner basis} for a non-zero ideal $\mathcal{I}$ in the exterior algebra $\Lambda^{\bullet}_{\K}$, with respect to a monomial order $\prec$, is a  finite set of elements $ \mathcal{G}_{\prec} \subset \mathcal{I}$ with the property that for any $f \in \mathcal{I}$ there exists $g \in  \mathcal{G}_{\prec}$ such that $in_{\prec}(g)$ divides  $in_{\prec}(f)$. 
A Gr\"obner basis is called {\it reduced} if, for all $g \in \mathcal{G}_{\prec},\; in_{\prec}(g)$ has coefficient equal to $1$ and, for all $g \neq g' \in \mathcal{G}_{\prec}$, no monomial in $supp(g')$ is divisible by $in_{\prec}(g)$.
We refer to \cite{HH} for more details on the theory of Gr\"obner bases over exterior algebras.\\

In \cite{Y}, respectively \cite{F}, one constructs Gr\"obner bases for OS ideals (for arrangements, respectively for simple matroids in general), which are then used to describe bases for the associated OS algebras, bases that do not depend on the chosen field of coefficients. This already proves torsion freeness for the considered OS algebras.\\

 Let $\mathcal{C}(M)$ denote the set of circuits of a simple matroid $M$ on ground set $[n]$, $\mathcal{C}_q(M): = \{C \in \mathcal{C}(M)\; | \; |C| = q \}$ the subset of circuits of length $q$ and
$$\overline \partial_q : \langle e_C\; |\; C \in \mathcal{C}_{q+1}(M) \rangle_{\K}  \rightarrow (\I(M) / \Lambda^+ \I(M))^q_{\K} $$

\noindent the composition of the restriction $ \partial|_{\langle e_C\; |\; C \in \mathcal{C}_{q+1}(M) \rangle_{\K}}$ to the projection $\I^{q}_{\K}(M) \twoheadrightarrow (\I(M) / \Lambda^+ \I(M))^q_{\K}$, where  $\langle \rangle_{\K}$ denotes $\K$-span. \\

In \cite{MMP} one proves the torsion freeness in all degrees for the decomposable OS algebra associated to a graphic arrangement, by showing that the map $\overline \partial_q$, restricted to chordless $(q+1)$-circuits, is actually an isomorphism. It is not difficult to see, taking into account the description of circuits in signed graphic arrangements given by Zaslavsky (\cite{Z}), that the same technique can be used to prove the torsion freeness in all degrees for the decomposable OS algebra associated to a signed graphic arrangement.
The problem with this approach is that it does not extend to general arrangements, as shown by \cite[Ex. 4.7, Ex. 4.8]{MMP}.

We solve this problem by replacing the set of chordless $(q+1)$-circuits by a more suitable set of $(q+1)$-circuits, so the restriction of the map $\overline \partial_q$ remains an isomorphism.
We essentially use a result of Forge (\cite{F}) that describes the reduced Grobner basis of the OS ideal, for an arbitrary monomial order.\\

We recall some notions related to matroids, following \cite{F}. Let $M$ be a simple matroid on the ground set $[n]$, and $I$ an independent set of $M$. An element  $i \in [n]$ is called {\it active} with respect to $I$ if $I \cup \{i\}$ contains a circuit with minimal element $i$. 

Any permutation $\pi$ of $[n]$ defines an order on the elements of $[n]$,
$$ \; \pi^{-1}(1) <_{\pi} \pi^{-1}(2) <_{\pi} \dots <_{\pi} \pi^{-1}(n), $$ and a monomial order $\prec_{\pi}$ induced by 
$$e_{\pi^{-1}(1)} \prec_{\pi} e_{\pi^{-1}(2)} \prec_{\pi} \dots \prec_{\pi} e_{\pi^{-1}(n)}.$$

\noindent Let $\mathcal{C}^{\pi}(M)$ be the subset of circuits of $M$ such that:
\begin{enumerate}
\item $inf_{\prec_{\pi}}(C) = \alpha_{\pi}(C)$
\item  $C \setminus \alpha_{\pi}(C)$ is inclusion minimal with property (1), 
\end{enumerate}
where  $\alpha_{\pi}(C)$ is by definition the smallest active element with respect to the independent $C \setminus \{inf_{\prec_{\pi}}(C)\}$, relative to the order $\prec_{\pi}$.

\begin{theorem}(\cite{F})
\label{thm:forge}
 $\mathcal{G}_{\pi}:= \{ \partial(e_C) | C \in \mathcal{C}^{\pi}(M)\}$ is a reduced Gr\"obner basis for the ideal $\mathcal{I}(M)$ with respect to the monomial order $\prec_{\pi}$.
\end{theorem}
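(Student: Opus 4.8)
The plan is to verify the three defining properties of a reduced Gr\"obner basis for $\mathcal{G}_\pi$ in turn: that $\mathcal{G}_\pi \subset \mathcal{I}(M)$ and generates the initial ideal, that $\mathcal{G}_\pi$ is a Gr\"obner basis (i.e. every element of $\mathcal{I}(M)$ has initial monomial divisible by some $in_{\prec_\pi}(\partial(e_C))$, $C \in \mathcal{C}^\pi(M)$), and finally that the basis is reduced. The inclusion $\mathcal{G}_\pi \subset \mathcal{I}(M)$ is immediate since each $\partial(e_C)$ with $C$ a circuit is a generator of $\mathcal{I}(M)$ by definition. The first substantive step is to pin down $in_{\prec_\pi}(\partial(e_C))$ explicitly: if $C = \{c_0 <_\pi c_1 <_\pi \dots <_\pi c_k\}$, then $\partial(e_C) = \sum_{j=0}^k (-1)^j e_{C \setminus c_j}$, and since removing the $\prec_\pi$-smallest element $c_0$ leaves the $\prec_\pi$-largest monomial among the $e_{C\setminus c_j}$ (property (2) of a monomial order lets one compare these by comparing the "missing" generators), we get $in_{\prec_\pi}(\partial(e_C)) = e_{C \setminus \inf_{\prec_\pi}(C)}$. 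So the candidate initial ideal is generated by the monomials $e_{C \setminus \inf_{\prec_\pi}(C)}$ for $C$ ranging over $\mathcal{C}^\pi(M)$.

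Next I would recall Forge's / the classical combinatorial description of the standard monomials of the Orlik--Solomon algebra: for the order $\prec_\pi$, the monomials $e_I$ with $I$ a \emph{no-broken-circuit} (nbc) set form a $\K$-basis of $A^\bullet(M)$, independently of $\K$. Equivalently, a monomial $e_Y$ lies in $in_{\prec_\pi}(\mathcal{I}(M))$ if and only if $Y$ contains a broken circuit, i.e. a set of the form $C \setminus \inf_{\prec_\pi}(C)$ for some circuit $C$. This is exactly the statement that the set $\{\partial(e_C) : C \in \mathcal{C}(M)\}$ of \emph{all} circuit boundaries is a Gr\"obner basis. The content of the theorem is therefore the reduction from all circuits to the subfamily $\mathcal{C}^\pi(M)$: one must show that every broken circuit $C \setminus \inf_{\prec_\pi}(C)$ is divisible by (i.e. contains) some $C' \setminus \inf_{\prec_\pi}(C')$ with $C' \in \mathcal{C}^\pi(M)$. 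Here property (1) in the definition of $\mathcal{C}^\pi(M)$ — that $\inf_{\prec_\pi}(C) = \alpha_\pi(C)$, the smallest active element of $C \setminus \inf_{\prec_\pi}(C)$ — is what forces the "broken" part $C \setminus \alpha_\pi(C)$ to actually be obtainable by deleting the minimum, and property (2), inclusion-minimality of $C \setminus \alpha_\pi(C)$ among circuits with this property, is exactly a minimal-generator condition on the monomial ideal. I would argue: given any broken circuit $B = C \setminus \inf_{\prec_\pi}(C)$, the set $B$ is dependent, so it contains a circuit; among all circuits $C'$ with $C' \setminus \alpha_\pi(C') \subseteq B$ and $\inf_{\prec_\pi}(C') = \alpha_\pi(C')$, pick one with $C' \setminus \alpha_\pi(C')$ inclusion-minimal — this $C'$ lies in $\mathcal{C}^\pi(M)$ and its broken circuit divides $e_B$. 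The slightly delicate point, and the one I expect to be the \textbf{main obstacle}, is checking that such a $C'$ exists at all, i.e. that activity/minimality can be propagated downward through sub-circuits; this requires the exchange-type matroid lemmas from \cite{F} relating active elements of an independent set to active elements of its subsets, and is the genuinely matroid-theoretic core of the argument.

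Finally, for reducedness: the coefficient of $in_{\prec_\pi}(\partial(e_C)) = e_{C \setminus \inf_{\prec_\pi}(C)}$ is $\pm 1$, and after the sign normalization it is $1$; for the second condition, suppose $C \neq C' \in \mathcal{C}^\pi(M)$ and some monomial of $\partial(e_{C'})$ is divisible by $in_{\prec_\pi}(\partial(e_C)) = e_{C \setminus \inf_{\prec_\pi}(C)}$. The monomials of $\partial(e_{C'})$ are $e_{C' \setminus c}$ for $c \in C'$, each of size $|C'| - 1$; divisibility forces $C \setminus \inf_{\prec_\pi}(C) \subseteq C' \setminus c$, so in particular $|C| - 1 \le |C'| - 1$ and $C \setminus \inf_{\prec_\pi}(C) \subseteq C'$. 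If this containment is proper then $C \setminus \inf_{\prec_\pi}(C)$, a broken circuit, is a proper subset of the circuit $C'$, hence independent — contradiction. So $C \setminus \inf_{\prec_\pi}(C) = C' \setminus c$ for a unique $c$, forcing $|C| = |C'|$ and $C' = (C \setminus \inf_{\prec_\pi}(C)) \cup \{c\}$. One then uses the minimality clause (2) in the definition of $\mathcal{C}^\pi(M)$ applied to both $C$ and $C'$ — together with the fact that $\alpha_\pi$ of the common set $C \setminus \inf_{\prec_\pi}(C)$ is uniquely determined — to conclude $c = \inf_{\prec_\pi}(C)$, whence $C = C'$, a contradiction. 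I would write this last part carefully since the interplay between "the divided monomial is not necessarily the initial one of $\partial(e_{C'})$" and the size bookkeeping is where an off-by-one slip is easy.
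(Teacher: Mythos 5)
First, a point of order: the paper does not prove this statement --- Theorem \ref{thm:forge} is quoted from Forge \cite{F} and used as a black box, so there is no internal proof to compare against. Judged on its own terms, your outline has the right architecture: $in_{\prec_\pi}(\partial(e_C)) = e_{C\setminus \inf_{\prec_\pi}(C)}$ is correctly identified, the reduction to ``every broken circuit contains the broken circuit of some member of $\mathcal{C}^\pi(M)$'' via the nbc description of the initial ideal is the right strategy, and the reducedness check is correctly isolated as a separate task. However, the two steps that carry the actual matroid-theoretic content are both defective as written.

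In the Gr\"obner-basis step you assert that a broken circuit $B = C\setminus\inf_{\prec_\pi}(C)$ ``is dependent, so it contains a circuit.'' It is not: $B$ is a circuit minus a point, hence independent. The mechanism that actually produces the required $C'\in\mathcal{C}^\pi(M)$ is the activity descent built into the definition of $\alpha_\pi$: since $\inf_{\prec_\pi}(C)$ is active with respect to $B$, the smallest active element $\alpha$ of $B$ satisfies $\alpha \le_\pi \inf_{\prec_\pi}(C)$, and $B\cup\{\alpha\}$ contains a circuit $D$ with minimum $\alpha$ and $D\setminus\alpha\subseteq B$; iterating (at each step the broken circuit shrinks or its completing minimum strictly decreases) terminates at a circuit satisfying condition (1), and an inclusion-minimal choice then satisfies (2). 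You explicitly flag the existence of such a $C'$ as ``the main obstacle'' and defer it to Forge's lemmas, which is honest but means the core of the proof is absent from the proposal.

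In the reducedness step the argument breaks down at ``hence independent --- contradiction'': a broken circuit is \emph{always} independent, so a proper containment $C\setminus\inf_{\prec_\pi}(C)\subsetneq C'$ yields no contradiction, and your conclusion that $C\setminus\inf_{\prec_\pi}(C)=C'\setminus c$ therefore does not follow. The proper-containment case must instead be excluded by the minimality clause (2) --- which applies directly only when $c=\inf_{\prec_\pi}(C')$; when the divisible monomial $e_{C'\setminus c}$ is not the initial monomial of $\partial(e_{C'})$, a genuine circuit-exchange argument is needed and is missing. So both halves of the proof would need to be reworked before this could stand as a proof of Forge's theorem.
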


\begin{prop}
\label{prop:partial_bar}
Let $\pi$ be a permutation such that 
$$|\mathcal{C}_{q+1}^{\pi}(M)| = min\{|\mathcal{C}_{q+1}^{\pi'}(M)|\; |\; \pi' permutation \; of\; [n]\}.$$
 Then the map 
$$\overline\partial_q : \langle e_C \; | \;  C \in \mathcal{C}^{\pi}_{q+1}(M) \rangle _\K \rightarrow (\I (M)/ \Lambda^+ \I(M))_\K^q$$
 is an isomorphism.
\end{prop}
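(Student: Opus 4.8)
The map $\overline\partial_q$ is a surjection between two finite-dimensional $\K$-vector spaces, so it suffices to show that the source and target have the same dimension, i.e.\ that $|\mathcal{C}^\pi_{q+1}(M)| = \dim_\K (\I(M)/\Lambda^+\I(M))^q_\K$ whenever $\pi$ minimizes $|\mathcal{C}^{\pi'}_{q+1}(M)|$. Surjectivity itself needs a word: every element of $\I^q(M)$ is a $\Lambda$-linear combination of the boundaries $\partial(e_C)$, and modulo $\Lambda^+\I(M)$ only the degree-$0$ (scalar) coefficients survive, so $(\I(M)/\Lambda^+\I(M))^q_\K$ is spanned by the classes of $\partial(e_C)$ for $C$ a $(q+1)$-circuit; then, using Forge's Gröbner basis (Theorem~\ref{thm:forge}), each $\partial(e_C)$ reduces modulo $\mathcal{G}_\pi$ to a combination of the $\partial(e_{C'})$ with $C' \in \mathcal{C}^\pi_{q+1}(M)$, the reduction steps only ever producing boundaries of circuits of the \emph{same} length $q+1$ (the key point: the rewriting rules coming from a Gröbner basis of $\I(M)$ preserve degree, and one must check they can be taken inside $\langle e_C \mid C\in\mathcal{C}_{q+1}\rangle$ rather than needing $\Lambda^+$-multiples). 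So $\overline\partial_q$ is onto, and the content is the dimension count.

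For the dimension count I would argue as follows. Fix a $\pi$ realizing the minimum. From the reduced Gröbner basis $\mathcal{G}_\pi$ one reads off a $\K$-basis of $A^\bullet_\K(M) = \Lambda^\bullet_\K/\I(M)$ given by the standard (``non-broken-circuit-like'') monomials, and this basis is the \emph{same} for every field $\K$ — this is exactly Forge's description and is what gives torsion freeness of $A^\bullet(M)$. Hence $\dim_\K \I^q_\K(M) = \binom{n}{q} - \dim_\K A^q_\K(M)$ is independent of $\K$, and likewise the Hilbert function of $\Lambda^+\I(M)$ is field-independent, so $\dim_\K(\I/\Lambda^+\I)^q_\K$ is independent of $\K$; call this number $d_q$. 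On the other hand $|\mathcal{C}^\pi_{q+1}(M)|$ is a purely combinatorial quantity not involving $\K$ at all. Because $\overline\partial_q$ is surjective over every $\K$, we get $|\mathcal{C}^\pi_{q+1}(M)| \ge d_q$ for \emph{every} $\pi$; it remains to produce \emph{one} $\pi$ for which equality holds, and then minimality of our chosen $\pi$ forces equality there too, whence $\overline\partial_q$ is injective, hence an isomorphism.

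To exhibit such a $\pi$ I would look at the field $\K = \mathbb{Q}$ (or any field over which $M$ is representable, if one wants to stay with arrangements, but the argument should be made matroid-theoretic). Here the point is that the generating set $\{\partial(e_C) : C\in\mathcal{C}^\pi_{q+1}(M)\}$ becomes a \emph{basis} of $(\I/\Lambda^+\I)^q$ precisely when the members of $\mathcal{G}_\pi$ in degree $q$ (namely the $\partial(e_C)$, $C\in\mathcal{C}^\pi_{q+1}$) are $\K$-linearly independent modulo $\Lambda^+\I(M)$. But the initial monomials $\mathrm{in}_{\prec_\pi}(\partial(e_C))$ for distinct $C\in\mathcal{C}^\pi_{q+1}(M)$ are \emph{distinct} (this is part of $\mathcal{G}_\pi$ being a reduced Gröbner basis of $\I(M)$), and — this is the step to nail down carefully — one checks that reducing modulo $\Lambda^+\I(M)$ does not collapse these leading terms, because the monomials that could cause a collapse would have to lie in $(\Lambda^+\I(M))^q$, and a dimension/leading-term bookkeeping with $\mathcal{G}_\pi$ rules this out for a suitably chosen $\pi$. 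Concretely, $\Lambda^+\I(M)$ is generated in degrees $\ge q'+1$ where $q'+1$ is the minimal circuit length, so in the lowest relevant degree the reduction is automatic, and one propagates upward.

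**Main obstacle.** The delicate point is precisely the last one: establishing that for the minimizing $\pi$ the family $\{\overline\partial_q(e_C) : C\in\mathcal{C}^\pi_{q+1}(M)\}$ is linearly independent, i.e.\ that passing from $\I(M)$ to the quotient $\I(M)/\Lambda^+\I(M)$ does not introduce new relations among the Gröbner-basis boundaries of length $q+1$. Surjectivity and the field-independence of all the dimensions involved are comparatively routine; the crux is to leverage the ``inclusion-minimality'' condition (2) in the definition of $\mathcal{C}^\pi(M)$ together with the minimality hypothesis on $\pi$ to show that no nontrivial $\Lambda^+$-combination of shorter circuit boundaries can equal a nontrivial combination of the length-$(q+1)$ ones — equivalently, that the ``extra'' relations in $\Lambda^+\I(M)$ of degree $q$ are already accounted for by circuits of length $\le q$ and hence are invisible to $\overline\partial_q$.
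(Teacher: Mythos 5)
Your surjectivity argument is fine and matches the paper's (since $\mathcal{G}_\pi$ generates $\I(M)$, modulo $\Lambda^+\I(M)$ only the degree-$q$ members of $\mathcal{G}_\pi$ with scalar coefficients survive). But the heart of the proposition is injectivity, and there your proposal has a genuine gap that you yourself flag as ``the delicate point'': you reduce everything to producing \emph{one} permutation for which $|\mathcal{C}^{\pi'}_{q+1}(M)|$ equals $\dim_\K(\I(M)/\Lambda^+\I(M))^q_\K$, but you never produce it --- the ``dimension/leading-term bookkeeping \dots for a suitably chosen $\pi$'' is exactly the statement to be proved, not an argument for it. Your remark that $\Lambda^+\I(M)$ starts in high degree only disposes of the lowest relevant degree; the ``propagate upward'' step is where all the difficulty lives (compare Example \ref{ex:2orders}, where different orders yield Gr\"obner bases with different numbers of circuits, so the existence of a good order is genuinely nontrivial). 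There is also a circularity in your dimension count: the field-independence of $\dim_\K(\I(M)/\Lambda^+\I(M))^q_\K$ (equivalently, of the Hilbert function of $\Lambda^+\I(M)$) is not available in advance --- it is Corollary \ref{cor:decompI}, which the paper \emph{deduces from} this proposition.

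The paper's proof supplies exactly the construction you are missing, by direct contradiction rather than a dimension count. If $\sum_C \mu_C e_C \in \ker\overline\partial_q$ with some $\mu_C\neq 0$, one writes $\sum_C \mu_C\,\partial(e_C)=\sum \xi_{\overline C,S}\, e_S\,\partial(e_{\overline C})$ with $e_S\in\Lambda^+$ and $\overline C\in\mathcal{C}^{\pi}_{\le q}(M)$; the initial monomials of the $\partial(e_C)$, $C\in\mathcal{C}^\pi_{q+1}(M)$, are pairwise distinct and cannot cancel, so $in_{\prec_\pi}(\partial(e_C))$ must occur in some $e_S\,\partial(e_{\overline C})$, necessarily through a \emph{non-initial} monomial of $\partial(e_{\overline C})$ (reducedness excludes the initial one). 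One then explicitly modifies the order --- moving the omitted element $i_j$ of $\overline C$ immediately to the left of $\min_{\prec_\pi}\overline C$, with further local adjustments --- to obtain a permutation $\pi_0$ for which $\overline C$ remains in $\mathcal{C}^{\pi_0}(M)$, $C$ drops out of it, and $\mathcal{C}^{\pi_0}_{q+1}(M)$ injects into $\mathcal{C}^{\pi}_{q+1}(M)$ avoiding $C$, whence $|\mathcal{C}^{\pi_0}_{q+1}(M)|<|\mathcal{C}^{\pi}_{q+1}(M)|$, contradicting minimality. This explicit reordering argument is the substance of the proof; without it (or a substitute) your outline is incomplete.
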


\begin{proof}
Since $\mathcal{G}_{\pi}$ is a Gr\"obner basis for $\I(M)$, it's also a system of generators for $\I(M)$, so it immediately follows that the map $\overline\partial_q$ is a surjection. 
Also, notice that the map $\partial_q :  \langle e_C \; | \;  C \in \mathcal{C}^{\pi}_{q+1}(M) \rangle _\K \rightarrow \I (M)_\K^q$  is injective, following from the fact that  $\mathcal{G}_{\pi}$ is reduced.

To prove injectivity for  $\overline\partial_q$, let 
\begin{equation}
\label{eq:sum}
\mathfrak{e} = \sum_{C \in  \mathcal{C}^{\pi}_{q+1}(M)}\mu_C e_C  \in Ker (\overline \partial_q), \; \mu_C \in \K, \; \overline \partial_q(\mathfrak{e}) = 0.
\end{equation}
 Then
\begin{equation}
\label{eq:*}
\sum_{C \in  \mathcal{C}^{\pi}_{q+1}(M)}\mu_C \partial (e_C) =  \sum_{S, \overline{C}}\xi_{\overline{C}, S} e_S \partial(e_{\overline{C}}), \; \xi_{\overline{C}, S} \in \K, \; e_S \in \Lambda^+, \; \overline{C} \in \mathcal{C}^{\pi}_{\leq q}(M).
\end{equation}

Consider the initial monomials $in_{\prec_{\pi}}(\partial(e_C))$ for all $C$ appearing in the sum \eqref{eq:sum}. All these monomials must be distinct, otherwise, there would exist two $(q+1)$-circuits, say $C_1, C_2 \in \mathcal{C}^{\pi}_{q+1}(M)$, such that $in_{\prec_{\pi}}(\partial(e_{C_1})) = in_{\prec_{\pi}}(\partial(e_{C_2}))$, then $C_1$ and $C_2$ would differ by an element, $inf_{\prec_{\pi}}(C_1) \neq inf_{\prec_{\pi}}(C_2)$. But this implies that  $C_1, C_2 $ cannot be simultaneously elements in $\mathcal{C}^{\pi}(M)$, since either $\alpha_{\pi}(C_1) \prec_{\pi} inf_{\prec_{\pi}}(C_1) $ or  $ \alpha_{\pi}(C_2) \prec_{\pi} inf_{\prec_{\pi}}(C_2)$, contradiction.

In fact, since $C \in  \mathcal{C}^{\pi}_{q+1}(M)$ for all $(q+1)$-circuits $C$ in \eqref{eq:sum}, i.e. $\partial(e_C)$ are all elements of a reduced Gr\"obner basis for $\I(M)$, the monomials $in_{\prec_{\pi}}(e_C)$ do not cancel out with any other monomial in the left hand side of the equality \eqref{eq:*}. So, if the coefficient $\mu_C \neq 0$, then on the right side of  \eqref{eq:*} there exists $\overline{C} \in \mathcal{C}^{\pi}_{\leq q}(M)$ such that $in_{\prec_{\pi}}(\partial(e_C))$ is a monomial in $e_S \partial(e_{\overline{C}})$, for some $e_S \in \Lambda^+, \; \overline{C} \in \mathcal{C}^{\pi}_{\leq q}(M)$. 
Say $e_X$ is the monomial in $\partial(e_{\overline{C}})$ such that $  in_{\prec_{\pi}}(e_C)= e_S e_X$. 

If $e_X =  in_{\prec_{\pi}}(e_{\overline{C}})$, then $ in_{\prec_{\pi}}(e_{\overline{C}})$ divides  $ in_{\prec_{\pi}}(e_C)$, contradiction, since both $\partial(e_C)$ and $\partial(e_{\overline{C}})$ are elements in the reduced Grobner basis $\mathcal{G}_{\pi}$. 
Then necessarily $e_X \neq in_{\prec_{\pi}}(e_{\overline{C}})$. Let $e_{\overline{C}} = e_{i_1} \wedge  \dots \wedge e_{i_p}, \;  e_{i_1} \prec  \dots  \prec e_{i_p}, \; p \leq q$ 
and $e_X = e_{\overline{C} \setminus \{i_j \}} = e_{i_1} \wedge \dots \widehat{e_{i_j}} \dots   \wedge e_{i_p}$, with $e_{i_1} \prec_{\pi} e_{i_j}$.
In this case, we will show that there is another monomial order, induced by a permutation $\pi_0$ on $[n]$, such that $|\mathcal{C}_{q+1}^{\pi_0}(M)
| < |\mathcal{C}_{q+1}^{\pi}(M)|$, thus arriving at a contradiction.

Take a permutation $\pi'$ on $[n]$ such that $\pi'^{-1}(k) = \pi^{-1}(k)$ for $k <  \pi(i_1)$ or $k >  \pi(i_j)$, $\pi'^{-1}(\pi(i_1)) =  i_j$ and $\pi'^{-1}(k) = \pi^{-1}(k-1)$, for $\pi(i_1) < k \leq   \pi(i_j)$. Basically, the corresponding order $<_{\pi'}$ on $[n]$ is obtained from the order  $<_{\pi}$ by moving $i_j$ immediately to the left of $i_1$.
 
In order to prove $\overline{C} \in \mathcal{C}^{\pi'}(M)$, it is enough to show that 
  $i_j = \alpha_{\pi'}(\overline{C})$.
Take $e_m \in \Lambda^+, m \in [n]$, such that $(m, i_1, i_2, \dots, \hat{i_j}, \dots, i_p)$ contains a circuit with minimal element $m$, in the ordering $\prec_{\pi'}$. Then $m$ is in the closure $cl(\{ i_1, i_2, \dots, \hat{i_j}, \dots, i_p\}) = cl(\overline{C}) = cl(\{ i_2, i_3, \dots, i_j, \dots,  i_p \})$. This means that the set $\{ m, i_2, i_3, \dots, i_j, \dots, i_p \}$ contains a circuit. If moreover we assume $m <_{\pi'} i_j$, then $m <_{\pi} i_1$, contradiction to $i_1 = \alpha_{\pi}(\overline{C})$. It follows that $i_j <_{\pi'} m$, consequently $i_j =  \alpha_{\pi'}(\overline{C})$.

Notice that  $\alpha_{\pi}(C) = \alpha_{\pi'}(C) = e_{inf_{\prec_{\pi}}(C)}$. 
But, since $in_{\prec_{\pi'}}(e_C) = e_S \wedge in_{\prec_{\pi'}}(e_{\overline{C}}) $, $C \notin  \mathcal{C}_{q+1}^{\pi'}(M)$.

We will show that any circuit in  $\mathcal{C}_{q+1}^{\pi'}(M)$, or, eventually,  any circuit in  $\mathcal{C}_{q+1}^{\pi_0}(M)$ (corresponding to a suitable modification $\pi_0$ of $\pi'$) induces a circuit in $\mathcal{C}_{q+1}^{\pi}(M)$, 
 and the correspondence is injective, and, moreover, $\pi_0$ still retains the properties of $\pi'$ above mentioned, related to the circuits $C, \overline{C}$. This injective correspondence, along with the fact that $C \in \mathcal{C}_{q+1}^{\pi}(M)$ is not the correspondent of any circuit in $ \mathcal{C}_{q+1}^{\pi_0}(M)$, would imply that $|\mathcal{C}_{q+1}^{\pi_0}(M)| < |\mathcal{C}_{q+1}^{\pi}(M)|$, contradiction to the hypothesis on the permutation $\pi$.

It is not hard to check that any circuit $C'$ in $\mathcal{C}_{q+1}^{\pi'}(M)$ such that $inf_{\prec_{\pi'}}(C') \neq i_j$ is also a circuit in $\mathcal{C}_{q+1}^{\pi}(M)$, and $inf_{\prec_{\pi'}}(C') = inf_{\prec_{\pi}}(C')$. If a circuit $C_0 \in \mathcal{C}_{q+1}^{\pi'}(M)$ is such that $inf_{\prec_{\pi'}}(C_0)) = i_j$, then, if $C_0 = (i_j, a_1, \dots, a_q)$ is not already a circuit in  $\mathcal{C}_{q+1}^{\pi}(M)$, there are two situations to consider. Notice that if $C_0 \notin \mathcal{C}_{q+1}^{\pi}(M)$, then $i_j$ is no longer the minimal element of $C_0$, with respect to the order induced by $\pi$.

Firstly, if there exists $a' <_{\pi} a_1$, such that $(a', i_j, \widehat{a_1}, a_2, \dots, a_q)$ contains a circuit $c$ with $a' = \alpha_{\prec_{\pi}}(c)$. Then $a' <_{\pi'} a_1$ and the set $S := (a', a_1, \dots, a_q)$ is dependent. But, since $C_0 = (i_j, a_1, \dots, a_q)$ is a circuit in  $\mathcal{C}_{q+1}^{\pi'}(M)$, the set $S$ must be a circuit in itself, otherwise we get a strict subset of $S$ that is a circuit in  $\mathcal{C}_{\leq q}^{\pi'}(M)$ with minimal element $a'$,
a contradiction to the fact that $C_0 \in \mathcal{G}_{\pi'}$, which is reduced. Then, corresponding to the circuit  $C_0 \in \mathcal{C}_{q+1}^{\pi'}(M)$ we get the $(q+1)$-circuit $S$ in $\mathcal{C}_{q+1}^{\pi}(M)$.

Secondly, assume there exists $a' >_{\pi} a_1$, such that $(a', i_j, \widehat{a_1}, a_2 \dots, a_q)$ contains a circuit with $a'$ minimal element with respect to $<_{\pi}$. Then  $a' >_{\pi'} a_1$ and it is possible that $(a', i_j, \widehat{a_1}, a_2 \dots, a_q)$ contains a circuit with length strictly smaller than $q+1$. In this case we alter the permutation $\pi'$ by moving $a'$ immediately to the left of $a_1$, and we are back in the situation from the first case, so we can define a $(q+1)$-circuit associated to $C_0$ in $\mathcal{C}_{q+1}^{\pi}(M)$. The newly obtained permutation, say $\pi''$, still retains the properties required for $\pi'$, that is, $\overline{C} \in \mathcal{C}^{\pi''}(M)$  and $in_{\prec_{\pi'}}(e_C) = e_S \wedge in_{\prec_{\pi'}}(e_{\overline{C}}) $, so $C \notin  \mathcal{C}^{\pi''}(M)$. Moreover, each new modification, in the sense described before, of a permutation is situated to the right of the previous one in the order induced by the permutation. 
Finally, notice that two distinct circuits in $\mathcal{C}_{q+1}^{\pi''}(M)$ cannot correspond to the same circuit in $\mathcal{C}_{q+1}^{\pi}(M)$, due to the fact that $\mathcal{G}_{\pi''}$ is reduced.

In this way, eventually after a number of moves as described before, we get a permutation $\pi_0$ of $[n]$ with the required properties.

\end{proof}

\begin{example}
\label{ex:2orders}
Let us revisit for instance \cite[Example 4.8]{MMP}. Let $\A \subset \CC ^4$ be the arrangement of equation $xyzt(x+y+z+t)(x-y-z+t)=0$. Denote by $H, P$, the hyperplanes of equations $x+y+z+t=0$, respectively  $x-y-z+t=0$. The set of circuits of $\A$ is $\{ (H,P,y,z), \; (H,P,x,t), \; (H,x,y,z,t), \; (P,x,y,z,t) \}$.
Let  $\Lambda^{\bullet}(\A): = \Lambda (e_H, e_P, e_x, e_y, e_z, e_t)$ be the exterior algebra  with generators labelled by the hyperplanes of $\A$. Consider first the monomial order on    $\Lambda^{\bullet}(\A)$ induced by $x<y<z<t<H<P$.  The reduced Gr\"obner basis of the OS ideal $\I(\A)$ with respect to this order is given by the boundaries of the set of  $4$ circuits $\{ (y, z, H, P), \; (x, t, H, P), \; (x, y, z ,t , H),  (x, y, z, t,  P)\}$.

But for the monomial order induced by, say, $H < P < x < y < z < t$, the associated reduced Gr\"obner basis of  $\I(\A)$ is given by the boundaries of the set of $3$ circuits $\{ (H,P,y,z), \; (H,P,x,t), \; (H,x,y,z,t)\}$, and $3$ is the minimal cardinal for the reduced Gr\"obner bases of $\I(\A)$ obtained by varying the order among the hyperplanes of $\A$.

In this example, $dim_{\K}  (\I (\A)/ \Lambda^+ \I(A))_\K^4 = 1$, $(\I (\A)/ \Lambda^+ \I(A))_\K^q = \I (\A)_\K^q$, for $q<4$, and  $(\I (\A)/ \Lambda^+ \I(A))_\K^q = 0$, for $q>4$.
\end{example}

\begin{corollary}
\label{cor:decompI}
The graded abelian group $\I (M)/ \Lambda^+ \I(M)$ is torsion free, in any degree $q$.
\end{corollary}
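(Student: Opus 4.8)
The plan is to deduce torsion freeness of $\I(M)/\Lambda^+\I(M)$ from Proposition \ref{prop:partial_bar} by exhibiting an integral basis that is stable under field extension. First I would fix, for each $q$, a permutation $\pi = \pi_q$ realizing the minimum $|\mathcal{C}_{q+1}^{\pi}(M)|$ as in the hypothesis of Proposition \ref{prop:partial_bar}; crucially, the combinatorial data $\mathcal{C}^{\pi}(M)$ depends only on the matroid and not on the coefficient field, so the same $\pi$ works after any specialization. By the Proposition, $\overline\partial_q$ is an isomorphism of $\K$-vector spaces for every field $\K$, and the same statement over $\Z$ — that $\overline\partial_q \colon \langle e_C \mid C \in \mathcal{C}^{\pi}_{q+1}(M)\rangle_{\Z} \to (\I(M)/\Lambda^+\I(M))^q$ is an isomorphism of abelian groups — follows once I check that the source is a free $\Z$-module (it is, being spanned by distinct monomials $e_C$) and that the map is surjective with trivial kernel over $\Z$. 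Surjectivity over $\Z$ is immediate since $\mathcal{G}_\pi = \{\partial(e_C) \mid C \in \mathcal{C}^\pi(M)\}$ generates $\I(M)$ as an ideal over $\Z$ (Theorem \ref{thm:forge}), so the integral span of the $\overline\partial_q(e_C)$ already fills $(\I(M)/\Lambda^+\I(M))^q$. Injectivity over $\Z$ follows from injectivity over $\mathbb{Q}$, which is the case $\K = \mathbb{Q}$ of the Proposition.

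Having established that $(\I(M)/\Lambda^+\I(M))^q$ is the homomorphic image of the free $\Z$-module on $\{e_C \mid C \in \mathcal{C}^\pi_{q+1}(M)\}$ under an isomorphism, I conclude that $(\I(M)/\Lambda^+\I(M))^q$ is itself free, hence torsion free. Alternatively, and more transparently, one can argue by counting: the universal-coefficient / base-change comparison shows $\dim_{\K}(\I(M)/\Lambda^+\I(M))^q_{\K} = |\mathcal{C}^\pi_{q+1}(M)|$ is independent of $\K$, which for a finitely generated graded abelian group presented by an integer matrix forces the absence of torsion (a torsion summand $\Z/m$ would make the $\mathbb{F}_p$-dimension jump for $p \mid m$). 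Either phrasing gives the result degree by degree, and since there are only finitely many nonzero degrees the conclusion holds in all degrees simultaneously.

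The main subtlety — and the place where I would be most careful — is the compatibility of Proposition \ref{prop:partial_bar} with change of base ring: I must make sure the proposition is being applied with one and the same permutation $\pi$ for all coefficient fields, which is legitimate precisely because $\mathcal{C}^\pi(M)$, the minimality in conditions (1)–(2) defining $\mathcal{C}^\pi(M)$, and the cardinality-minimizing property of $\pi$ are all matroid-theoretic and field-independent. Given that, the integral statement is a formal consequence, and no further combinatorial analysis of circuits is needed beyond what the proposition already supplies. I do not anticipate a genuine obstacle here; the corollary is essentially a bookkeeping wrap-up of Proposition \ref{prop:partial_bar}.
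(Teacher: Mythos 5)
Your proposal is correct and matches the paper's argument: the published proof is exactly your ``counting'' variant, namely that $\dim_{\K}(\I(M)/\Lambda^+\I(M))^q_{\K}=|\mathcal{C}^{\pi}_{q+1}(M)|$ is a field-independent combinatorial quantity by Proposition \ref{prop:partial_bar}, which rules out torsion. Your additional direct argument over $\Z$ (free source, surjectivity from the integral division algorithm against $\mathcal{G}_{\pi}$, injectivity from the $\K=\mathbb{Q}$ case) is a harmless elaboration of the same idea rather than a different route.
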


\begin{proof}
Follows immediately from Proposition \ref{prop:partial_bar}.
\end{proof}

\begin{theorem}
\label{thm:main}
 The decomposable Orlik-Solomon algebra $A_{+}^{\bullet}(M)$  is torsion free.
\end{theorem}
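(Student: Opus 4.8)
The plan is to deduce Theorem \ref{thm:main} from Corollary \ref{cor:decompI} by a one-line homological argument. First I would record, in each degree $q$, the short exact sequence of abelian groups
$$
0 \longrightarrow \bigl(\mathcal{I}(M)/\Lambda^{+}\mathcal{I}(M)\bigr)^{q} \longrightarrow A_{+}^{q}(M) \longrightarrow A^{q}(M) \longrightarrow 0 ,
$$
coming from the chain of inclusions $\Lambda^{+}\mathcal{I}(M)\subseteq \mathcal{I}(M)\subseteq \Lambda^{\bullet}$: the right-hand arrow is the canonical surjection $\Lambda^{\bullet}/\Lambda^{+}\mathcal{I}(M)\twoheadrightarrow \Lambda^{\bullet}/\mathcal{I}(M)$, and its kernel is precisely $\mathcal{I}(M)/\Lambda^{+}\mathcal{I}(M)$.

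Next I would invoke torsion freeness for the two outer terms. The quotient $A^{\bullet}(M)$ is torsion free: by Theorem \ref{thm:forge}, for a fixed permutation $\pi$ the initial ideal of $\mathcal{I}(M)$ is independent of the field of coefficients, hence so is the resulting set of standard (nbc) monomials, which is a basis of $A^{\bullet}_{\K}(M)$ for every $\K$ — this is Forge's theorem (\cite{F}), and for arrangements it is already recorded above via \cite{Y}. The term $\mathcal{I}(M)/\Lambda^{+}\mathcal{I}(M)$ is torsion free by Corollary \ref{cor:decompI}. Since an extension of a torsion-free abelian group by a torsion-free abelian group is again torsion free (if $nx=0$ in $A_{+}^{q}(M)$ with $n\neq 0$, then the image of $x$ in $A^{q}(M)$ vanishes, so $x$ lies in the torsion-free group $\bigl(\mathcal{I}(M)/\Lambda^{+}\mathcal{I}(M)\bigr)^{q}$, whence $x=0$), it follows that $A_{+}^{\bullet}(M)$ is torsion free. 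Equivalently, and in the spirit of Corollary \ref{cor:decompI}, the sequence above gives $\dim_{\K}\bigl(A_{+}^{\bullet}(M)_{\K}\bigr)^{q}=\dim_{\K} A^{q}_{\K}(M)+\dim_{\K}\bigl(\mathcal{I}(M)/\Lambda^{+}\mathcal{I}(M)\bigr)^{q}_{\K}$ with both summands independent of $\K$, and a graded abelian group that is finitely generated in each degree and whose rational and mod-$p$ Betti numbers all agree has no torsion.

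I do not expect a genuine obstacle at this stage: the substance of the statement has already been carried by Proposition \ref{prop:partial_bar}. The only minor points to check carefully are the identification of $\ker\bigl(A_{+}^{q}(M)\to A^{q}(M)\bigr)$ with $\bigl(\mathcal{I}(M)/\Lambda^{+}\mathcal{I}(M)\bigr)^{q}$ and the fact that forming all these quotients commutes with specialization to any field $\K$; both are immediate, since $\Lambda^{+}\mathcal{I}(M)$ is spanned over $\Z$ by the products $e_{S}\,\partial(e_{C})$ with $e_{S}\in\Lambda^{+}$ and $C\in\mathcal{C}(M)$.
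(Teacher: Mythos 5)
Your argument is essentially the paper's own proof: the paper deduces the theorem from Corollary \ref{cor:decompI}, the torsion freeness of $A^{\bullet}(M)$, and the exact sequence $0 \to (\mathcal{I}(M)/\Lambda^{+}\mathcal{I}(M))^{\bullet} \to A_{+}^{\bullet}(M) \to A^{\bullet}(M) \to 0$, exactly as you do. The only (harmless) difference is that the paper invokes the splitting of this sequence, whereas you correctly observe that splitness is not needed since an extension of a torsion-free abelian group by a torsion-free abelian group is already torsion free.
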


\begin{proof}
It follows from the previous Corollary \ref{cor:decompI} and the split exact sequence:
$$
0 \rightarrow (\I(M) /\Lambda^+ \I(M))^{\bullet} \rightarrow A_{+}^{\bullet} (M) \rightarrow A^{\bullet} (M) \rightarrow 0.
$$
 \end{proof}

\begin{corollary}
\label{cor:pi_p}
In the hypothesis of Theorem \ref{thm:MMp_torsion},  $\gr^1_I \pi_p(X(\A))$ is torsion free.
\end{corollary}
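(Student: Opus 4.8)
The plan is to deduce the final corollary (torsion freeness of $\gr^1_I \pi_p(X(\A))$) directly from Theorem~\ref{thm:main} combined with Theorem~\ref{thm:MMp_torsion}. Since the arrangement $\A$ in the hypothesis of Theorem~\ref{thm:MMp_torsion} is hypersolvable and not supersolvable, the equivalence stated there applies: condition (2), namely that $A_{+}^{\bullet}(\A)$ is free (torsion free as an abelian group) in degree $\bullet = p+2$, is equivalent to condition (1), namely that $\gr^1_I \pi_p(X(\A))$ has no torsion. Theorem~\ref{thm:main}, applied to the matroid $M(\A)$ associated to the arrangement, asserts that $A_{+}^{\bullet}(M(\A)) = A_{+}^{\bullet}(\A)$ is torsion free in all degrees, in particular in degree $p+2$. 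Hence condition (2) holds, and therefore condition (1) holds, which is exactly the assertion of Corollary~\ref{cor:pi_p}.

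Concretely, the steps I would carry out are: first, invoke Theorem~\ref{thm:main} for the simple matroid $M = M(\A)$ on ground set $\A$ (identified with $[n]$), obtaining that $A_{+}^{\bullet}(M(\A))$ has no torsion in any degree. Second, recall from the preliminaries that $A_{+}^{\bullet}(\A)$ is by definition $A_{+}^{\bullet}(M(\A))$, so in particular $A_{+}^{p+2}(\A)$ is a free (equivalently torsion free, being finitely generated) abelian group. Third, note that $p>1$ is minimal with $\pi_p(X(\A))$ nontrivial, precisely the integer appearing in Theorem~\ref{thm:MMp_torsion}, so that theorem's equivalence is available verbatim. Fourth, apply the implication (2)$\Rightarrow$(1) of Theorem~\ref{thm:MMp_torsion} to conclude that $\gr^1_I \pi_p(X(\A))$ has no torsion.

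There is essentially no obstacle: all the heavy lifting has already been done, with the combinatorial content in Proposition~\ref{prop:partial_bar} and Corollary~\ref{cor:decompI} (via Forge's description of reduced Gröbner bases, Theorem~\ref{thm:forge}), the algebraic assembly in Theorem~\ref{thm:main}, and the topological translation in the cited Theorem~\ref{thm:MMp_torsion} from \cite{MMP}. The only minor point to be careful about is the identification $A_{+}^{\bullet}(\A) = A_{+}^{\bullet}(M(\A))$ and the matching of the degree $p+2$ with the minimal length of a circuit $C \in \mathcal{C}(\A)$ with $|C|>3$ (recalled in the remark following Theorem~\ref{thm:MMp_torsion}), but since Theorem~\ref{thm:main} gives torsion freeness in \emph{all} degrees, we do not even need to track which specific degree is relevant. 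Thus the proof is a one-line application of the two results, and I would write it as such.

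\begin{proof}[Proof of Corollary~\ref{cor:pi_p}]
Let $M(\A)$ be the simple matroid associated to the arrangement $\A$. By Theorem~\ref{thm:main}, the decomposable Orlik--Solomon algebra $A_{+}^{\bullet}(M(\A)) = A_{+}^{\bullet}(\A)$ is torsion free; in particular it is free in degree $\bullet = p+2$, which is condition (2) of Theorem~\ref{thm:MMp_torsion}. Since $\A$ is hypersolvable and not supersolvable, and $p>1$ is minimal with $\pi_p(X(\A))$ nontrivial, Theorem~\ref{thm:MMp_torsion} applies, and the equivalence (2)$\Leftrightarrow$(1) yields that $\gr^1_I \pi_p(X(\A))$ has no torsion.
\end{proof}
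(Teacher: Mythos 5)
Your proof is correct and is exactly the argument the paper intends: the corollary is an immediate application of the implication (2)$\Rightarrow$(1) of Theorem~\ref{thm:MMp_torsion}, with hypothesis (2) supplied in every degree (in particular $\bullet=p+2$) by Theorem~\ref{thm:main} applied to $M(\A)$. The paper leaves this one-liner implicit, and your observation that freeness and torsion freeness coincide here because the groups are finitely generated is the only point worth recording.
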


\begin{proof}
Immediately from Theorem \ref{thm:MMp_torsion} and Theorem \ref{thm:main}.
\end{proof}


\begin{thebibliography}{00}

\bibitem{DP} A. Dimca, S. Papadima,
{\em Hypersurface complements, Milnor fibers and higher
homotopy groups of arrangements}, Ann. Math. \textbf{158} (2003), 473--507.

\bibitem{FR} M. ~Falk, R. ~Randell
{\em The lower central series of a fiber-type arrangement},
Invent. Math., 82 (1985),  77--88.

\bibitem{F} D. ~Forge,
{\em Bases in Orlik–Solomon Type Algebras},
Europ. J. Combinatorics (2002) 23, 567--572.

\bibitem{HH}J.~Herzog, T.~Hibi,
{\em Monomial ideals}
Graduate Texts in Mathematics (GTM, volume 260), 2011.

\bibitem{JP1} M.~Jambu, S.~Papadima,
{\em A generalization of fiber-type arrangements and
a new deformation method}, Topology \textbf{37}
(1998), 1135--1164.

\bibitem{JP2} \bysame, {\em Deformations of hypersolvable
arrangements},
Topology Appl. \textbf{118} (2002), 103--111.

\bibitem{MMP} D. ~Matei, A. ~Macinic, S. ~Papadima
{\em On the second nilpotent quotient of higher homotopy groups,
fir hypersolvable arrangements}
Int. Math. Res. Notices vol. 2015, no.24 (2015), 13194--13207.

\bibitem{OT} P.~Orlik, H.~Terao,
{\em Arrangements of hyperplanes}, Grundlehren Math. Wiss., vol.~300,
Springer-Verlag, Berlin, 1992.

\bibitem{PS} S.~Papadima, A.~Suciu, 
{\em Higher homotopy groups of complements of complex hyperplane
arrangements},
Advances in Math. \textbf{165} (2002), 71--100.

\bibitem{S} R. ~Stanley
{\em Supersolvable lattices},
Algebra Universalis, 2 (1972), 197--217.

\bibitem{Y} S.~Yuzvinsky,
{\em Orlik--Solomon algebras in algebra and topology},
Russian Math. Surveys \textbf{56} (2001), 87--166.

\bibitem{Z}T.~ Zaslavsky
{\em Signed graphs},
Discrete Appl. Math.,  \textbf{4} (1) (1982), 47–-74.

\end{thebibliography}
\end{document}